 \newcommand{\K}{{\mathbb K}}
\newcommand{\id}{\operatorname{id}}
   \theoremstyle{plain}
   \newtheorem{thm}{Theorem}
   \newtheorem{lem}[thm]{Lemma}
   \newtheorem{cor}[thm]{Corollary}
   \theoremstyle{definition}
   \theoremstyle{remark}
 \title[]%
{One more pathology of $C^*$-algebraic tensor products}
\author{V. Manuilov}
\address{Department of Mechanics and Mathematics, Moscow State
University, Leninskie Gory, Moscow, 119991, Russia {\sl and\ }
Harbin Institute of Technology, Harbin, P. R. China}
\email{manuilov@mech.math.msu.su}
\thanks{The author acknowledges partial support from
RFFI, grant No. 07-01-00046}
\subjclass[2000]{Primary 46L06, Secondary 46L05}
\begin{document}

\begin{abstract}
We define a collection of tensor product norms for $C^*$-algebras
and show that a symmetric tensor product functor on the category
of separable $C^*$-algebras need not be associative.
\end{abstract}

\maketitle

\section{Introduction}

Following E. Kirchberg, \cite{Kirchberg-ICM}, we call a bifunctor
$(A,B)\to A\otimes_\alpha B$ a {\it $C^*$-algebraic tensor product
functor} if it is obtained by completing of the algebraic tensor
product $A\odot B$ of $C^*$-algebras in a functional way with
respect to a suitable $C^*$-norm $\|\cdot\|_\alpha$. We call such
a functor {\it symmetric} if the standard isomorphism $A\odot
B\cong B\odot A$ extends to an isomorphism $A\otimes_\alpha B\cong
B\otimes_\alpha A$. Similarly, we call it {\it associative} if the
standard isomorphism $A\odot(B\odot C)\cong (A\odot B)\odot C$
extends to an isomorphism $A\otimes_\alpha(B\otimes_\alpha C)\cong
(A\otimes_\alpha B)\otimes_\alpha C$ for any $C^*$-algebras $A$,
$B$, $C$. It is well known that both the minimal tensor product
functor $\otimes_{\min}$ and the maximal tensor product functor
$\otimes_{\max}$ are symmetric and associative.

In this paper we construct a collection of symmetric
$C^*$-algebraic tensor product functors related to asymptotic
homomorphisms of $C^*$-algebras. For technical reasons we restrict
ourselves to the category of {\it separable} $C^*$-algebras. Using
$C^*$-algebras related to property T groups \cite{WassT} we show
that some of these tensor product functors are not associative.

Recall that asymptotic homomorphisms of $C^*$-algebras were first
defined and studied in \cite{CH} in relation to topological
properties of $C^*$-algebras. The most important and the best
known case is the case of asymptotic homomorphisms from a
suspended $C^*$-algebra $SA$ to the $C^*$-algebra $\K$ of compact
operators, since the homotopy classes of those are the
$K$-homology of $A$, the $E$-theory. Asymptotic homomorphisms to
other $C^*$-algebras are less known. For example, it is known that
any asymptotic homomorphism to the Calkin algebra is homotopic to
a genuine homomorphism \cite{M-Crelle,MT4}. Even less is known
about asymptotic homomorphisms to $\mathbb B(H)$, where there is
no topological obstruction (recall that the $K$-groups of $\mathbb
B(H)$ are trivial). Such asymptotic homomorphisms are called {\it
asymptotic representations} and were first studied in relation to
the asymptotic tensor product $C^*$-algebras \cite{MT6} and to
semi-invertibility of $C^*$-algebra extensions \cite{MT7}.

\section{Definition of asymptotic $C^*$-tensor products}

Recall \cite{CH} that an asymptotic homomorphism $\varphi$ from a
$C^*$-algebra $A$ to a $C^*$-algebra $D$ is a family of maps
$\varphi=(\varphi_t)_{t\in[0,\infty)}:A\to D$ satisfying the
following properties:
\begin{itemize}
\item[1.]
the map $t\mapsto \varphi_t(a)$ is continuous for any $a\in A$;
\item[2.]
$\lim_{t\to\infty}\varphi_t(a+\lambda
b)-\varphi_t(a)-\lambda\varphi_t(b)=
\lim_{t\to\infty}\varphi_t(a^*)-\varphi_t(a)^*=
\lim_{t\to\infty}\varphi_t(ab)-\varphi_t(a)\varphi_t(b)=0$ for any
$a,b\in A$ and any $\lambda\in\mathbb C$.
\end{itemize}

Let $\mathbb L(H)$ be the algebra of bounded operators on a
separable Hilbert space $H$. Our point is that we would like to
consider $D$ as a $C^*$-subalgebra of $\mathbb L(H)$:
$D\subset\mathbb L(H)$. We also view asymptotic homomorphisms to
$D$ as asymptotic representations on $H$ taking values in $D$. We
are mostly interested in the special case $D=\mathbb
K^\infty=\prod_{n=1}^\infty\mathbb K=\prod_{n=1}^\infty\mathbb
K(H_n)$, where $H_n=H$ for all $n\in\mathbb N$ and $\mathbb
K=\mathbb K(H)$ is the $C^*$-algebra of compact operators on $H$.

Let $A$, $B$ be separable $C^*$-algebras and let
$\varphi=(\varphi_t)_{t\in[0,\infty)},\psi=(\psi_t)_{t\in[0,\infty)}$
be asymptotic representations of $A$ and $B$ respectively, taking
values in $D$.

Let $A \odot B$ be the algebraic tensor product of $A$ and $B$.
For each $a \in A$ and $b \in B$, we can define elements
$a^{\varphi \otimes \psi} , b^{\varphi \otimes \psi} \in C_b\left(
[0,\infty), \mathbb L(H \otimes H)\right)$ by
$$
a^{\varphi \otimes \psi}(t) = \varphi_t(a) \otimes 1_H \quad {\rm
and}\quad b^{\varphi \otimes \psi}(t) = 1_H \otimes \psi_t(b).
$$

Note that $a^{\varphi \otimes \psi}(t)\cdot b^{\varphi \otimes
\psi}(t)\in C_b\left([0,\infty), D\otimes_{\min}D)\right)$, where
$\otimes_{\min}$ denotes the minimal tensor product of
$C^*$-algebras.

We can then define a $*$-homomorphism
$$
\varphi \otimes \psi : A \odot B \to C_b\left( [0,\infty),
D\otimes_{\min}D\right)/ C_0\left( [0,\infty),
D\otimes_{\min}D\right)
$$
such that
$$
\varphi \otimes \psi\Bigl( \sum\nolimits_i a_i \otimes b_i\Bigr) =
q\Bigl(\sum\nolimits_i a_i^{\varphi \otimes \psi}\cdot
b_i^{\varphi \otimes \psi}\Bigr),
$$
where
 $$
q:C_b\left( [0,\infty), D\otimes_{\min}D\right)\to C_b\left(
[0,\infty), D\otimes_{\min}D\right)/C_0\left( [0,\infty),
D\otimes_{\min}D\right)
 $$
is the quotient map. Note that
 $$
\|\varphi\otimes\psi(c)\|=\lim\sup\nolimits_{t\to\infty}\Bigl\|\sum\nolimits_{i}\varphi(a_i)\otimes\psi(b_i)\Bigr\|
 $$
for any $c=\sum_i a_i \otimes b_i\in A\odot B$. We can now define
a seminorm $\| \cdot \|_{D,0}$ on $A \odot B$ by
$$
\| c \|_{D,0} = \sup\nolimits_{\varphi, \psi} \|\varphi
\otimes\psi(c)\|,
$$
where we take the supremum over all pairs $(\varphi,\psi)$, where
$\varphi$ and $\psi$ are asymptotic representations of $A$ and
$B$, respectively, taking values in $D$.

Note that a genuine $*$-homomorphism from $A$ to $D$ can be
considered as an asymptotic representation in the obvious way. So,
if $D=\mathbb L(H)$ then $\|\cdot\|_{D,0}\geq\|\cdot\|_{\min}$,
and $\|\cdot\|_{D,0}$ is a norm. This norm coincides with the
symmetric asymptotic tensor norm defined in \cite{MT7}. More
generally, the seminorm $\|\cdot\|_{D,0}$ is a norm if there exist
{\it faithful} asymptotic representations of $A$ and $B$ taking
values in $D$. Remark that there are other $C^*$-algebras $D$,
besides $\mathbb L(H)$, that admit faithful asymptotic
representations of any separable $C^*$-algebra. For example, it
follows from \cite{MT4} that one can take the {\it coarse Roe
algebra} of $\mathbb Z$ as $D$.

In general, the seminorm $\| \cdot \|_{D,0}$ may be degenerate
(e.g. it may happen that any asymptotic representation of a
$C^*$-algebra $A$ taking values in some $D$ may be asymptotically
equivalent to zero, see Lemma \ref{degenerate} below), so let us
define the norm $\|\cdot\|_D$ on $A\odot B$ by
 $$
\|c\|_D=\max\{\|c\|_{\min},\|c\|_{D,0}\},
 $$
where $c\in A\odot B$. Clearly, $\|\cdot\|_D$ is a $C^*$-norm,
hence a cross-norm, and
$$
\|\cdot\|_{\min}\leq\|\cdot\|_{D}\leq\|\cdot\|_{\max}.
$$
We denote by $A\otimes_{D}B$ the $C^*$-algebra obtained by
completing $A\odot B$ with respect to the norm $\|\cdot\|_{D}$.
Obviously the correspondence $(A,B)\mapsto A\otimes_D B$ is a
$C^*$-algebraic tensor product functor on the category of
separable $C^*$-algebras.

 \begin{lem}
The functor $\otimes_D$ is symmetric.

 \end{lem}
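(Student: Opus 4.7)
The plan is to show that the flip map $\tau:A\odot B\to B\odot A$, $\sum_i a_i\otimes b_i\mapsto \sum_i b_i\otimes a_i$, is isometric with respect to $\|\cdot\|_D$; it then extends to a $*$-isomorphism of the completions. Since $\|\cdot\|_D=\max\{\|\cdot\|_{\min},\|\cdot\|_{D,0}\}$ and $\|\cdot\|_{\min}$ is already known to be symmetric (the minimal tensor product functor is symmetric), the entire problem reduces to showing that $\|\cdot\|_{D,0}$ is symmetric.

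To prove this, I would use the symmetry of $\otimes_{\min}$ at the level of the receiving algebra $D\otimes_{\min}D$. Let $\sigma:D\otimes_{\min}D\to D\otimes_{\min}D$ be the isometric flip $\sigma(x\otimes y)=y\otimes x$. Applied pointwise in $t$, this induces an isometric $*$-isomorphism of $C_b([0,\infty),D\otimes_{\min}D)$ that preserves the ideal $C_0([0,\infty),D\otimes_{\min}D)$, hence descends to an isometric $*$-isomorphism $\bar\sigma$ of the quotient. Given asymptotic representations $\varphi,\psi$ of $A,B$ taking values in $D$, a direct comparison of the defining formulas shows
\eQ{\bar\sigma\bigl((\varphi\otimes\psi)(c)\bigr)=(\psi\otimes\varphi)(\tau(c))}
for every $c\in A\odot B$, because $\sigma(\varphi_t(a)\otimes 1_H)(1_H\otimes\psi_t(b))=(1_H\otimes\varphi_t(a))(\psi_t(b)\otimes 1_H)$ factor by factor.

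Since $\bar\sigma$ is isometric, $\|(\varphi\otimes\psi)(c)\|=\|(\psi\otimes\varphi)(\tau(c))\|$ for every pair $(\varphi,\psi)$. The supremum defining $\|c\|_{D,0}$ ranges over all pairs $(\varphi,\psi)$ where $\varphi$ is an asymptotic representation of $A$ in $D$ and $\psi$ one of $B$ in $D$; the analogous supremum for $\|\tau(c)\|_{D,0}$ ranges over pairs $(\psi',\varphi')$ of asymptotic representations of $B$ and $A$ in $D$. These two indexing sets are tautologically in bijection via $(\varphi,\psi)\leftrightarrow(\psi,\varphi)$, so the suprema agree and $\|c\|_{D,0}=\|\tau(c)\|_{D,0}$.

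There is no real obstacle here: the only thing to verify carefully is that $\sigma$ intertwines the two definitions of $\varphi\otimes\psi$ and $\psi\otimes\varphi$, which is immediate from the formulas $a^{\varphi\otimes\psi}(t)=\varphi_t(a)\otimes 1_H$ and $b^{\varphi\otimes\psi}(t)=1_H\otimes\psi_t(b)$. Combining the equalities $\|c\|_{\min}=\|\tau(c)\|_{\min}$ and $\|c\|_{D,0}=\|\tau(c)\|_{D,0}$ gives $\|c\|_D=\|\tau(c)\|_D$, and $\tau$ therefore extends to the required $*$-isomorphism $A\otimes_D B\cong B\otimes_D A$.
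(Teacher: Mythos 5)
Your proof is correct: reducing to the symmetry of $\|\cdot\|_{D,0}$ via the max with $\|\cdot\|_{\min}$, and then intertwining $\varphi\otimes\psi$ with $\psi\otimes\varphi$ through the isometric flip on $D\otimes_{\min}D$ together with the tautological bijection of the indexing pairs, is exactly the argument the paper has in mind. The paper's own proof is the single word ``Obvious,'' so your writeup simply supplies the details the author chose to omit.
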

 \begin{proof}
Obvious.

 \end{proof}

\section{Asymptotic representations taking values in $\mathbb K^\infty$}

Let $G$ be a residually finite infinite property T group, let
$\pi_n$ be the sequence of all non-equivalent irreducible unitary
representations on finitedimensional Hilbert spaces $H_n$ and let
$\overline{A}$ be the $C^*$-algebra generated by operators
$\oplus_{n=1}^\infty\pi_n(g)$, $g\in G$. We denote by $E$ the
$C^*$-subalgebra in $\mathbb L(\oplus_{n=1}^\infty H_n)$ generated
by $\overline{A}$ and by compact operators:
$E=\overline{A}+\mathbb K$. Put $A=E/\mathbb K$. This
$C^*$-algebra was first considered by S. Wassermann and we refer
to his paper \cite{WassT} for more details.

 \begin{lem}\label{degenerate}
Let $\varphi=(\varphi_t)_{t\in[0,\infty)}:A\to \mathbb K^\infty$
be an asymptotic homomorphism. Then $\varphi$ is asymptotically
equivalent to zero, i.e. $\lim_{t\to\infty}\varphi_t(a)=0$ for any
$a\in A$.

 \end{lem}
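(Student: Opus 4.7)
The plan is to reduce to the component asymptotic homomorphisms, use the known absence of nonzero finite-dimensional representations of the Wassermann algebra $A$ to rule out nonzero $*$-homomorphisms $A\to\mathbb K$, and then invoke property T to ensure the asymptotic homomorphism is (asymptotically) genuine.

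First I would decompose $\mathbb K^\infty=\prod_n\mathbb K$ componentwise. For each $n$, composing $\varphi_t$ with the $n$-th coordinate projection gives an asymptotic homomorphism $\varphi_t^n:A\to\mathbb K$, and the identity $\|\varphi_t(a)\|=\sup_n\|\varphi_t^n(a)\|$ reduces the problem to showing $\sup_n\|\varphi_t^n(a)\|\to 0$ as $t\to\infty$ for every $a\in A$. Since the defining identities for $\varphi_t$ are measured in the sup norm on $\mathbb K^\infty$, they hold for $\{\varphi_t^n\}$ uniformly in $n$.

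Next I would show that every $*$-homomorphism $\rho:A\to\mathbb K$ is zero. Indeed, $\rho(1_A)$ is a finite-rank projection $p\in\mathbb K$, so $\rho$ factors through $p\mathbb Kp\cong M_r$ as a finite-dimensional representation of $A$. This pulls back through the quotient $\overline A\to A$ to a finite-dimensional representation of $\overline A$; restricted to the generating unitaries $u_g$ it decomposes as a finite direct sum $\bigoplus_j\pi_{n_j}$. To factor through $A=\overline A/(\overline A\cap\mathbb K)$ it must annihilate $\overline A\cap\mathbb K$. But, by Burnside density combined with the isolation of each finite-dimensional irreducible of $G$ in the unitary dual (a consequence of property T), each block projection $P_n$ onto $H_n$ lies in $\overline A\cap\mathbb K$ and satisfies $\pi_n(P_n)=I_{H_n}\neq 0$, so no $\pi_n$ component can descend, forcing $\rho=0$.

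For the final step I would invoke uniform stability of asymptotic unitary representations of a property T group: for a finite generating set $S\subset G$, there is a modulus $\delta(\varepsilon)\to 0$ as $\varepsilon\to 0$, depending only on $G$ and $S$, such that any map into the unitary group of a unital $C^*$-algebra whose multiplicativity defect on $S\times S$ is at most $\varepsilon$ is within $\delta(\varepsilon)$ on $S$ of a genuine unitary representation of $G$. Applied to $\{\varphi_t^n(u_g)\}_{g\in S}$---whose defect is bounded by some $\varepsilon(t)\to 0$ uniformly in $n$ by the first paragraph---this produces genuine $*$-homomorphisms $\rho^{t,n}:A\to\mathbb K$ within $\delta(\varepsilon(t))$ of $\varphi_t^n$ on $S$. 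By the previous paragraph each $\rho^{t,n}$ is zero, so $\sup_n\|\varphi_t^n(u_g)\|\to 0$ for every $g\in S$; density of the generators together with uniform asymptotic multiplicativity extends this to every $a\in A$. The principal obstacle is the stability step: one must supply a modulus $\delta$ uniform in the target algebra and verify that the approximating genuine representations actually factor through the quotient $A$. If this quantitative stability is not directly available, a cleaner alternative is to pass to the induced genuine $*$-homomorphism $A\to C_b([0,\infty),\mathbb K^\infty)/C_0([0,\infty),\mathbb K^\infty)$ and argue via an ultrapower that its image lies in a subalgebra to which the no-finite-dimensional-representation argument still applies.
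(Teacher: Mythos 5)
Your first two steps are sound: the componentwise reduction is fine, and your proof that $A$ admits no nonzero finite-dimensional representation (via the central projections $P_n\in\overline{A}\cap\mathbb K$ coming from the isolation of each $\pi_n$ in the dual, rather than via Wassermann's tensor-norm computation, which is what the paper uses) is a correct and self-contained route to the key fact. The gap is exactly where you flag it: the ``uniform stability'' claim in your third paragraph --- that an $\varepsilon$-representation of $G$ on generators is $\delta(\varepsilon)$-close to a genuine representation with $\delta$ independent of the target algebra (in particular independent of the matrix size) --- is not a known consequence of property T for the operator norm, is not proved in your sketch, and is precisely the hard point. Without it, for each fixed $n$ you only get the qualitative statement that some subnet of $t\mapsto\varphi^n_t(u_g)$ accumulates at a genuine representation, with no control uniform in $n$. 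Your fallback (pass to $C_b([0,\infty),\mathbb K^\infty)/C_0(\cdot)$ and an ultrapower) does not obviously repair this: that quotient is far from finite-dimensional, so the no-finite-dimensional-representations argument does not apply to it, and showing that a $*$-homomorphism of $A$ into it must vanish is essentially the statement being proved.

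The paper's proof shows how to avoid any stability input. Uniformity in $n$ is extracted only for the single element $1\in A$: $\varphi_t(1)$ is an approximate projection in $\mathbb K^\infty$, so one functional-calculus argument yields projections $p_n(t)$ with $\|q_n\circ\varphi_t(1)-p_n(t)\|<\varepsilon$ for all $n$ and all $t>t_0$ simultaneously. Then, for each \emph{fixed} $n$, the compression $p_n\,\varphi^n_t(\cdot)\,p_n$ lands in the fixed matrix algebra $M_{N_n}$, and compactness of $U_{N_n}^k$ produces an accumulation point as $t\to\infty$ which is a genuine representation of $G$ of dimension $N_n$ weakly contained in the tail of $\oplus_m\pi_m$; the nonexistence of such representations (your step two, or Wassermann's argument) forces $N_n=0$. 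No rate, and no uniformity in $n$, is needed here because the conclusion $p_n=0$ is exact. Finally $\|\varphi_t(1)\|<\varepsilon$ for large $t$ gives $\|\varphi_t(a)\|\leq\|\varphi_t(1)\varphi_t(a)\|+o(1)\leq\varepsilon\|\varphi_t(a)\|+o(1)$, hence $\varphi_t(a)\to0$ for every $a$. You should replace your third step by an argument of this kind; as written the proof is incomplete.
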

 \begin{proof}
Let $q_n:\prod_{n=1}^\infty\mathbb K\to \mathbb K$ is the
projection onto the $n$-th copy. Then, for any $\varepsilon>0$
there exists $t_0$ and continuous families $p_n(t)\in\mathbb K_n$
of finitedimensional projections such that
 \begin{equation}\label{e1}
\|q_n\circ\varphi_t(1)-p_n(t)\|<\varepsilon
 \end{equation}
for any $t>t_0$ and any $n\in\mathbb N$. Let $N_n$ denote the rank
of $p_n(t)$. Since all projections of the same finite rank are
unitarily equivalent, without loss of generality (by changing
$\varphi$ by a unitarily equivalent asymptotic homomorphism) we
may assume that all $p_n(t)$ are $t$-independent, $p_n(t)=p_n$.
Then the formula $\psi_t(a)=p_n(q_n\circ\varphi_t(a))p_n$ defines
an asymptotic homomorphism from $A$ to the matrix algebra. The
group $G$ with the stated properties is known to be finitely
generated, so without loss of generality we may assume that
$\psi_t(g_i)$ are unitaries, where $g_i\in G$, $i=1,\ldots,k$, are
generators for $G$.

Since the direct product of $k$ copies of the unitary group
$U_{N_n}$ is compact, so the set
$\{(\psi_t(g_1),\ldots,\psi_t(g_k)):t\in[0,\infty)\}$ has an
accumulation point $(u_1,\ldots,u_k)\in U_{N_n}^k$. If we put
$\sigma(g_i)=u_i$ then this map extends to a genuine
representation of $G$ of dimension $N_n$. Indeed, $G$ is a
quotient of the free group $\mathbb F_k$ generated by
$g_1,\ldots,g_k$ modulo some relations and each $\psi_t$ and
$\sigma$ obviously define representations of $\mathbb F_k$, which
we denote by the same characters. If $r\in\mathbb F_k$ is a
relation then $\lim_{t\to\infty}\|\psi_t(r)-p_n\|=0$. Therefore,
$\sigma(r)=p_n$, hence $\sigma$ factorizes through a
representation of $G$.

Suppose that $p_n\neq 0$ for some $n$. This implies that the
representation $\sigma$ is non-zero, hence it equals one of
$\pi_j$. Since
 $$
\|a\|\geq\lim\sup\nolimits_{t\to\infty}\|\psi_t(a)\|\geq\|\sigma(a)\|=\|\pi_j(a)\|,
 $$
we have $\lim\sup_{n\to\infty}\|\pi_n(a)\|\geq\|\pi_j(a)\|$ for
any $a\in A$. Then the identity map of $G$ extends to a
$*$-homomorphism $i:A\to C^*_{\pi_j}(G)$, where $C^*_{\pi}(G)$
denotes the $C^*$-algebra generated by the representation $\pi$.
Tensoring it by $\id_{C^*_{\overline{\pi}_j}(G)}$, where
$\overline{\pi}$ denotes the contragredient representation for a
representation $\pi$, we get a $*$-homomorphism
 \begin{equation}\label{map1}
i\otimes \id_{{C^*_{\overline{\pi}_j}(G)}}:A\otimes
C^*_{\overline{\pi}_j}(G)\to C^*_{\pi_j}(G)\otimes
C^*_{\overline{\pi}_j}(G).
 \end{equation}
We do not specify the tensor product norm here because
$C^*_{\overline{\pi}_j}(G)$ is finitedimensional, hence nuclear.
It was shown in \cite{WassT} that the norm on the left hand side
of (\ref{map1}) is strictly smaller than the norm on the right
hand side, so this $*$-homomorphism cannot exist. This
contradiction shows that $p_n=0$ for all $n$, hence (\ref{e1})
implies that $\lim_{t\to\infty}\|q_n\circ\varphi_t(1)\|=0$
uniformly in $n$, therefore,
$\lim_{t\to\infty}\|q_n\circ\varphi_t(a)\|=0$ uniformly in $n$ for
any $a\in A$.

 \end{proof}

 \begin{cor}\label{min}
For $A$ defined above, one has $A\otimes_{\mathbb K^\infty}
B=A\otimes_{\min} B$ for any $C^*$-algebra $B$.

 \end{cor}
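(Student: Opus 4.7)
The plan is to reduce the statement to Lemma~\ref{degenerate}, via the two-part definition of $\|\cdot\|_D$. Recall that $\|c\|_{\mathbb K^\infty} = \max\{\|c\|_{\min},\|c\|_{\mathbb K^\infty,0}\}$, so since $\|\cdot\|_{\min}\le\|\cdot\|_{\mathbb K^\infty}$ always, it suffices to show that the seminorm $\|\cdot\|_{\mathbb K^\infty,0}$ vanishes identically on $A\odot B$. Once this is established, the norms $\|\cdot\|_{\mathbb K^\infty}$ and $\|\cdot\|_{\min}$ agree on $A\odot B$, and hence their completions coincide.

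To show the vanishing, fix $c=\sum_{i=1}^m a_i\otimes b_i\in A\odot B$ and an arbitrary pair $(\varphi,\psi)$ of asymptotic representations of $A$ and $B$ into $\mathbb K^\infty$. From the formula stated in the text,
\eQ{
\|\varphi\otimes\psi(c)\|=\limsup_{t\to\infty}\Bigl\|\sum_{i=1}^m \varphi_t(a_i)\otimes\psi_t(b_i)\Bigr\|.
}
By Lemma~\ref{degenerate} applied to $\varphi$, one has $\lim_{t\to\infty}\|\varphi_t(a_i)\|=0$ for each $i$. On the other hand, $\psi_t(b_i)$ is eventually uniformly bounded in $t$, since asymptotic homomorphisms of $C^*$-algebras satisfy $\limsup_{t\to\infty}\|\psi_t(b_i)\|\le\|b_i\|$. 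Consequently each summand $\varphi_t(a_i)\otimes\psi_t(b_i)$ has norm tending to $0$, and therefore $\|\varphi\otimes\psi(c)\|=0$.

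Taking the supremum over all admissible pairs $(\varphi,\psi)$ gives $\|c\|_{\mathbb K^\infty,0}=0$, hence $\|c\|_{\mathbb K^\infty}=\|c\|_{\min}$ for every $c\in A\odot B$, which yields $A\otimes_{\mathbb K^\infty}B=A\otimes_{\min}B$. There is no real obstacle here: all the difficulty is absorbed into Lemma~\ref{degenerate}, and the only minor point to keep in mind is the eventual boundedness of $\psi_t$, which is a standard consequence of the asymptotic multiplicativity and self-adjointness axioms for asymptotic homomorphisms.
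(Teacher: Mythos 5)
Your proof is correct and follows essentially the same route as the paper: both reduce the claim to showing $\|\cdot\|_{\mathbb K^\infty,0}=0$ on $A\odot B$ via Lemma~\ref{degenerate} together with the asymptotic contractivity bound $\limsup_t\|\psi_t(b)\|\le\|b\|$ (the paper checks elementary tensors and invokes subadditivity of the seminorm, while you apply the triangle inequality directly to a finite sum, which is the same argument). Your explicit remark about the eventual boundedness of $\psi_t(b_i)$ is a point the paper uses silently.
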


 \begin{proof}
Since
 $$
\|\varphi\otimes\psi(a\otimes
b)\|=\lim\sup\nolimits_{t\to\infty}\|\varphi_t(a)\otimes\psi_t(b)\|=
\lim\sup\nolimits_{t\to\infty}\|\varphi_t(a)\|\cdot\|\psi_t(b)\|=0
 $$
for any $a\in A$, $b\in B$ and for any asymptotic representations
$\varphi$ and $\psi$, one has
 $$
\|a\otimes b\|_{\mathbb
K^\infty,0}=\sup\nolimits_{\varphi,\psi}\|\varphi\otimes\psi(a\otimes
b)\|=0,
 $$
hence $\|c\|_{\mathbb K^\infty,0}=0$ for any $c\in A\odot B$,
therefore,
 $$
\|c\|_{\mathbb K^\infty}=\max\{\|c\|_{\min},0\}=\|c\|_{\min}.
 $$

 \end{proof}

\section{An example of an asymptotic representation taking values in $\mathbb K^\infty$}

Let $C=C_0(0,1]$. We are going to construct an asymptotic
representation $\varphi^0$ of $C\otimes A$ taking values in
$\mathbb K^\infty$. (We do not specify here the tensor norm since
$C$ is nuclear.)

Let $\chi:A\to E$ be a continuous homogeneous selfadjoint
selection map, cf. \cite{BG}. We denote by $P_n$ the projection in
$\oplus_{n=1}^\infty H_n$ onto $H_n$. For $a\in A$ put
$\alpha(a)=\iota\circ\chi(a)$, where $\iota:E\to\mathbb
L(\oplus_{n=1}^\infty H_n)$ is the standard inclusion.


Let $\{\tau_n\}_{n\in\mathbb N}$ be a dense sequence of points in
$(0,1)$.

For $t=k\in\mathbb N$ and for $f\in C$ put
 $$
\beta_k(f)=\sum_{n=k+1}^\infty f(\tau_n)P_n.
 $$
(this sum also is convergent with respect to the $*$-strong
topology). If $k<t<k+1$ then put
 $$
\beta_t(f)=f((t-k)\tau_{k+1})P_{k+1}+\beta_{k+1}(f).
 $$

Let $F\in C\otimes A$. One can consider $F$ as a continuous
function on $[0,1]$ taking values in $A$ such that $F(0)=0$. Put
 $$
\phi_k(F)=\sum_{n=k+1}^\infty P_n\alpha(F(\tau_n))P_n,
 $$
where the sum is $*$-strongly convergent, and
 $$
\phi_t(F)=P_{k+1}\alpha(F((t-k)\tau_{k+1}))P_{k+1}+\phi_k(F)
 $$
for $k<t<k+1$.

 \begin{lem}
The family of maps $(\phi_t)_{t\in[0,\infty)}$ is an asymptotic
representation of $C\otimes A$ taking values in
$\prod_{n=1}^\infty\mathbb L(H_n)\subset\mathbb K^\infty$.

 \end{lem}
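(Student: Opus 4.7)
My plan is to verify each axiom of an asymptotic representation by exploiting the orthogonal-block structure of $\phi_t(F)$. For integer $t = k$, the operator
\[
\phi_k(F) = \sum_{n \geq k+1} P_n \alpha(F(\tau_n)) P_n
\]
is a $*$-strongly convergent sum of operators supported on the mutually orthogonal finite-dimensional subspaces $H_n$, so $\phi_t(F) \in \prod_n \mathbb{L}(H_n) \subset \mathbb{K}^\infty$ with norm equal to the supremum of the block norms, uniformly bounded by $C \|F\|$ where $C$ controls $\chi$ on the unit ball (via Bartle--Graves). Continuity of $t \mapsto \phi_t(F)$ on each interval $(k, k+1)$ is clear because only the single term $P_{k+1} \alpha(F((t-k)\tau_{k+1})) P_{k+1}$ varies with $t$, and matching at integer endpoints uses $F(0) = 0$ together with the homogeneity of $\chi$ (so $\alpha(0) = 0$).

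Homogeneity and self-adjointness of $\phi_t$ hold exactly, not merely asymptotically, because $\chi$ is a homogeneous self-adjoint section: $\alpha(\lambda a) = \lambda \alpha(a)$ and $\alpha(a^*) = \alpha(a)^*$. The remaining axioms, asymptotic additivity and multiplicativity, reduce---via $\|\bigoplus_n T_n\| = \sup_n \|T_n\|$ and the vanishing of cross-block terms ($P_n P_m = 0$ for $n \neq m$) in $\phi_k(F) \phi_k(G)$---to the following pointwise estimates: for $a, b \in A$,
\[
\|P_n (\alpha(a+b) - \alpha(a) - \alpha(b)) P_n\| \to 0,
\qquad
\|P_n \alpha(a) P_n \alpha(b) P_n - P_n \alpha(ab) P_n\| \to 0
\]
as $n \to \infty$, uniformly in $(a, b)$ ranging over compact subsets of $A \times A$.

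For the multiplicative estimate I would split
\[
P_n \alpha(a) P_n \alpha(b) P_n - P_n \alpha(ab) P_n
= P_n(\alpha(a)\alpha(b) - \alpha(ab)) P_n - P_n \alpha(a)(1 - P_n) \alpha(b) P_n.
\]
The first piece is $P_n K P_n$ with $K = \alpha(a)\alpha(b) - \alpha(ab) \in \mathbb{K}$ (both are lifts of $ab$ under $\chi$), hence $\|P_n K P_n\| \to 0$ because $P_n \to 0$ strongly. For the second piece, any decomposition $\alpha(a) = x_a + L_a$ with $x_a \in \overline{A}$ block-diagonal (commuting with each $P_n$) and $L_a \in \mathbb{K}$ gives $P_n \alpha(a)(1 - P_n) = P_n L_a (1 - P_n)$, with norm $\leq \|P_n L_a\| \to 0$. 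The additive estimate is identical, since $\alpha(a+b) - \alpha(a) - \alpha(b) \in \mathbb{K}$.

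The main obstacle I expect is upgrading these limits to the required uniformity, since the decomposition $\alpha(a) = x_a + L_a$ and the compact corrections $K$ depend on the arguments while the sequences $F(\tau_n), G(\tau_n)$ move with $n$. I would handle this by an $\epsilon$-net argument: the images $F([0,1]), G([0,1]) \subset A$ are compact, so by continuity of $\chi$ the relevant compact correction operators land in a norm-precompact set $\mathcal{K} \subset \mathbb{K}$, and for such a set $\sup_{K \in \mathcal{K}} \|P_n K P_n\| \to 0$ by approximation through a finite $\epsilon$-net. The single interpolating block for non-integer $t$ contributes at most one extra error term absorbed by the same estimate, and the uniform bound $\|\phi_t(F)\| \leq C \|F\|$ extends the axioms from $C \odot A$ to all of $C \otimes A$ by density.
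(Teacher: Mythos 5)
Your proof is correct and follows the same basic strategy as the paper's: reduce everything to blockwise estimates, observe that the correction operators are compact, and use compactness of $F([0,1])$ together with a finite $\epsilon$-net to get the uniformity in $n$ that turns blockwise decay into decay of $\sup_{n>k}$. The one place you go genuinely beyond the paper is worth noting. The paper writes $\phi_k(F_1F_2)-\phi_k(F_1)\phi_k(F_2)=\sum_{n>k}P_nK_nP_n$ with $K_n=\alpha(F_1(\tau_n)F_2(\tau_n))-\alpha(F_1(\tau_n))\alpha(F_2(\tau_n))$, which silently discards the cross-block term $P_n\alpha(F_1(\tau_n))(1-P_n)\alpha(F_2(\tau_n))P_n$ produced by the middle projection in $P_n\alpha(F_1(\tau_n))P_n\alpha(F_2(\tau_n))P_n$. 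Your decomposition isolates exactly this term and kills it using $E=\overline{A}+\mathbb K$ with $\overline{A}$ block-diagonal; that is a real (if small) gap in the paper's argument that you fill. Two caveats on your version: (i) for the $\epsilon$-net argument to apply to the family $L_{F(\tau)}$ you need the splitting $\alpha(a)=x_a+L_a$ to depend continuously on $a$ --- this is easy (take a second continuous homogeneous section of $\overline{A}\to\overline{A}/(\overline{A}\cap\mathbb K)\cong A$ for $x_a$ and set $L_a=\alpha(a)-x_a$), but it should be said, since an arbitrary pointwise choice of $L_a$ need not range over a precompact set; (ii) the interpolation formulas for $\beta_t$ and $\phi_t$ as printed do not actually match at both integer endpoints (the evaluation point should slide from $\tau_{k+1}$ down to $0$ against the tail $\phi_{k+1}$), so your endpoint-matching claim via $F(0)=0$ and $\alpha(0)=0$ is correct only for the evidently intended definition, not the literal one. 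Your closing remark about extending from $C\odot A$ by density is unnecessary, since $\phi_t$ is already defined on all of $C\otimes A$, but it is harmless.
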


 \begin{proof}
By the definition, the maps $\phi_t$, $t\in[0,\infty)$, take
values in $\prod_{n=1}^\infty\mathbb L(H_n)$, so we only need to
check that algebraic properties hold asymptotically. Let us check
that for multiplication, as other properties can be checked in the
same way. Let $F_1,F_2\in C\otimes A$, then the operators
 $$
K_n=\alpha(F_1(\tau_n)F_2(\tau_n))-\alpha(F_1(\tau_n))\alpha(F_2(\tau_n))\in\mathbb
K
 $$
lie in a compact subset of $\mathbb K$ (the image of $[0,1]$ under
the continuous map
$\tau\mapsto\alpha(F_1(\tau)F_2(\tau))-\alpha(F_1(\tau))\alpha(F_2(\tau))$),
hence
 $$
\lim_{k\to\infty}\phi_k(F_1F_2)-\phi_k(F_1)\phi_k(F_2)=\lim_{k\to\infty}\sum_{n=k+1}^\infty
P_nK_nP_n=0.
 $$
Finally, we easily pass to the continuous parameter:
$\lim_{t\to\infty}\phi_t(F_1F_2)-\phi_t(F_1)\phi_t(F_2)=0$.

 \end{proof}

Note that if $F=f\otimes a\in C\otimes A$ then
 $$
\phi_t(f\otimes a)=\sum_{n=k}^\infty P_n\alpha(a)P_n\cdot
\beta_t(f).
 $$

\section{Compairing tensor norms}

Let $B$ be the $C^*$-algebra generated by operators
$\oplus_{n=1}^\infty\overline{\pi}_n(g)$, $g\in G$, where
$\overline{\pi}$ denotes the contragredient representation for
$\pi$.

 \begin{thm}
The tensor products $C\otimes_{\mathbb K^\infty}(A\otimes_{\mathbb
K^\infty}B)$ and $(C\otimes_{\mathbb K^\infty}A)\otimes_{\mathbb
K^\infty}B$ are not canonically isomorphic.

 \end{thm}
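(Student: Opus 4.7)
My plan starts by identifying the left-hand side. Corollary~\ref{min} gives $A\otimes_{\mathbb K^\infty}B=A\otimes_{\min}B$, and since $C=C_0(0,1]$ is nuclear, $\|\cdot\|_{\min}=\|\cdot\|_{\max}$ on $C\odot(A\otimes_{\min}B)$; the inequalities $\|\cdot\|_{\min}\le\|\cdot\|_{\mathbb K^\infty}\le\|\cdot\|_{\max}$ together with associativity of $\otimes_{\min}$ then identify the left-hand side with $C\otimes_{\min}A\otimes_{\min}B$. So the left-hand norm on the dense $*$-subalgebra $C\odot A\odot B$ is simply the minimal triple norm, and since the ``canonical'' map is the identity on $C\odot A\odot B$, it suffices to exhibit $z\in C\odot A\odot B$ with $\|z\|_{(C\otimes A)\otimes_{\mathbb K^\infty}B}>\|z\|_{\min}$.

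To bound the right-hand norm from below I would combine the asymptotic representation $\phi$ of $C\otimes A$ from Section~4 with the honest $*$-homomorphism $\psi:B\to\prod_n\mathbb K(H_n)\subset\mathbb K^\infty$, $b\mapsto(P_nbP_n)_n$. This $\psi$ is well defined because each $\overline\pi_n$ is finite-dimensional (so each block is compact), and a genuine representation is automatically asymptotic. Then
\[
\|z\|_{(C\otimes A)\otimes_{\mathbb K^\infty}B}\ge\limsup_{t\to\infty}\|(\phi_t\otimes\psi)(z)\|
\]
by definition of $\|\cdot\|_{\mathbb K^\infty,0}$.

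I would take $z=f\otimes w$ with $f\in C$, $\|f\|_\infty=1$, and $w=\sum_{i=1}^N\lambda_ia_i\otimes b_i\in A\odot B$ a finite polynomial approximation of the image $p_{A\otimes B}$ of the Kazhdan projection of $G$ under the $*$-homomorphism $\mathbb C G\to A\otimes B$, $g\mapsto e_g\otimes\overline e_g$. Choose the selection $\chi$ of Section~4 so that $\chi(a_i)=\oplus_m\pi_m(g_i)$ on the finite set of $a_i$ used (possible since $\oplus_m\pi_m(g_i)\in E$ is a lift of $a_i$). Then the block formula of Section~4 shows that, for $k<t<k+1$ and $n\ge k+2$, the $(n,n)$-block of $(\phi_t\otimes\psi)(z)$ is
\[
f(\tau_n)\sum_i\lambda_i\,\pi_n(g_i)\otimes\overline\pi_n(g_i)\in\mathbb L(H_n\otimes H_n),
\]
whose action on the $G$-invariant unit vector $\xi_n\in H_n\otimes H_n$ is multiplication by $f(\tau_n)\sum_i\lambda_i$. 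Density of $\{\tau_n\}$ in $(0,1)$ then gives $\limsup_t\|(\phi_t\otimes\psi)(z)\|\ge\|f\|_\infty\,|\sum_i\lambda_i|$, while Wassermann's strict norm gap from \cite{WassT}---the non-nuclearity of $A$ underlying the impossibility of (\ref{map1}) in Lemma~\ref{degenerate}---provides $\|p_{A\otimes B}\|_{\min,A\otimes B}\le 1-\delta$ for some $\delta>0$. Taking $w$ close enough to $p_{A\otimes B}$ in the maximal norm so that both $|\sum_i\lambda_i|>1-\delta/2$ and $\|w\|_{\min,A\otimes B}<1-\delta/2$, one obtains $\|z\|_{\min}=\|f\|_\infty\|w\|_{\min,A\otimes B}<\|f\|_\infty|\sum_i\lambda_i|\le\|z\|_{(C\otimes A)\otimes_{\mathbb K^\infty}B}$, and the canonical isomorphism cannot exist.

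The main obstacle is the precise form of the Wassermann input: one must upgrade the comparison between $A\otimes C^*_{\overline\pi_j}(G)$ and $C^*_{\pi_j}(G)\otimes C^*_{\overline\pi_j}(G)$ stated in Lemma~\ref{degenerate} to the strict gap $\|p_{A\otimes B}\|_{\min,A\otimes B}<1=\|p_{A\otimes B}\|_{\max,A\otimes B}$, which amounts to the non-nuclearity of $A$. The rest---the block-diagonal calculation, the density argument for $\{\tau_n\}$, and the approximation of the Kazhdan projection by elements of $\mathbb C G$---is mechanical.
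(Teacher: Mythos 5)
Your proposal is correct and follows essentially the same route as the paper: identify $C\otimes_{\mathbb K^\infty}(A\otimes_{\mathbb K^\infty}B)$ with the minimal tensor product via Corollary~\ref{min} and nuclearity of $C$, then bound the other norm from below by pairing the asymptotic representation $\phi_t$ of $C\otimes A$ with the block-diagonal genuine representation of $B$ and exploiting the invariant vectors $\xi_n\in H_n\otimes\overline{H}_n$ against Wassermann's strict norm gap. The only difference is cosmetic: the paper tests on $d=\sum_i f\otimes g_i\otimes g_i$ directly (norm $<m$ versus $=m$), whereas you test on an approximant of the Kazhdan projection --- an equivalent reformulation of the same input from \cite{WassT} (note that its image in $A\otimes_{\min}B$ is actually $0$, being a projection of norm $<1$, and that prescribing $\chi(a_i)=\oplus_m\pi_m(g_i)$ is unnecessary, since $\|P_n\alpha(g_i)P_n-\pi_n(g_i)\|\to 0$ automatically because $\alpha(g_i)-\oplus_m\pi_m(g_i)$ is compact).
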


 \begin{proof}
Let $f\in C$ be the identity function, $f(\tau)=\tau$, and let
$\{g_1,\ldots,g_m\}$ be a symmetric set of generators of the group
$G$ as above. We identify the group elements with the
corresponding unitaries in $C^*$-algebras generated by
representations of $G$ (like $B$) and in their quotients (like
$A$). Let $d=\sum_{i=1}^m f\otimes g_i\otimes g_i\in C\odot A\odot
B$. Denote by $\|\cdot\|_1$ and by $\|\cdot\|_2$ the norms on
$C\odot A\odot B$ inherited from $C\otimes_{\mathbb
K^\infty}(A\otimes_{\mathbb K^\infty}B)$ and $(C\otimes_{\mathbb
K^\infty}A)\otimes_{\mathbb K^\infty}B$ respectively. Our aim is
to show that $\|d\|_1\neq\|d\|_2$.

It follows from Lemma \ref{min} and from amenability of $C$ that
$C\otimes_{\mathbb K^\infty}(A\otimes_{\mathbb
K^\infty}B)=C\otimes_{\min}(A\otimes_{\min}B)$, so
 $$
\|d\|_1=\|f\|\cdot\Bigl\|\sum\nolimits_{i=1}^mg_i\otimes
g_i\Bigr\|_{\min}=\Bigl\|\sum\nolimits_{i=1}^mg_i\otimes
g_i\Bigr\|_{\min}.
 $$
It was shown in \cite{WassT} that the latter norm is strictly
smaller than $m$, so
 \begin{equation}\label{<m}
\|d\|_1<m.
 \end{equation}

When estimating the norm $\|\cdot\|_2$ from below, we may use two
special asymptotic representations instead of taking the supremum
over all of them. Let us take $\phi_t$ for $C\otimes A$ and the
identity representation for $B$. Then
 \begin{eqnarray*}
\|d\|_2&\geq&\lim\sup\nolimits_{t\to\infty}
\Bigl\|\sum\nolimits_{i=1}^m\phi_t(f\otimes
g_i)\otimes\sum\nolimits_{n=1}^\infty\overline{\pi}_n(g_i)P_n\Bigr\|\\
&=&\lim\sup\nolimits_{t\to\infty}\Bigl\|\sum\nolimits_{i=1}^m\sum\nolimits_{n=1}^\infty
P_n\beta_t(f)\alpha(g_i)P_n\otimes
\sum\nolimits_{n=1}^\infty\overline{\pi}_n(g_i)P_n\Bigr\|\\
&\geq&\lim\sup\nolimits_{t\to\infty}\sup\nolimits_n\Bigl\|P_n\beta_t(f)\sum\nolimits_{i=1}^m
\alpha(g_i)P_n\otimes
\overline{\pi}_n(g_i)P_n\Bigr\|\\
&\geq&\lim\sup\nolimits_{n\to\infty}\Bigl\|P_n\beta_n(f)\sum\nolimits_{i=1}^m
\alpha(g_i)P_n\otimes
\overline{\pi}_n(g_i)P_n\Bigr\|\\
&=&\lim\sup\nolimits_{n\to\infty}f(\tau_n)\cdot\Bigl\|\sum\nolimits_{i=1}^m
P_n\alpha(g_i)P_n\otimes \overline{\pi}_n(g_i)P_n\Bigr\|\\
&\geq&\lim\sup\nolimits_{j\to\infty}f(\tau_{n_j})\cdot\Bigl\|\sum\nolimits_{i=1}^m
P_{n_j}\alpha(g_i)P_{n_j}\otimes\overline{\pi}_{n_j}(g_i)\Bigr\|,
 \end{eqnarray*}
where $\{n_j\}$ is any increasing subsequence of integers. Since
the sequence $\{\tau_n\}_{n=1}^\infty$ is dense in $[0,1]$, we can
find a subsequence $\{n_j\}_{j=1}^\infty$ such that
$\lim_{j\to\infty}\tau_{n_j}=1$. Then
 \begin{eqnarray*}
\|d\|_2&\geq&\lim\sup\nolimits_{j\to\infty}f(\tau_{n_j})\cdot\Bigl\|\sum\nolimits_{i=1}^m
P_{n_j}\alpha(g_i)P_{n_j}\otimes\overline{\pi}_{n_j}(g_i)\Bigr\|\\
&=&\lim\sup\nolimits_{j\to\infty}\Bigl\|\sum\nolimits_{i=1}^m
P_{n_j}\alpha(g_i)P_{n_j}\otimes\overline{\pi}_{n_j}(g_i)\Bigr\|\\
&=&\lim\sup\nolimits_{j\to\infty}\Bigl\|\sum\nolimits_{i=1}^m
\pi_{n_j}(g_i)\otimes\overline{\pi}_{n_j}(g_i)\Bigr\|\\
&=&\Bigl\|\sum\nolimits_{i=1}^m
\pi_{n_j}(g_i)\otimes\overline{\pi}_{n_j}(g_i)\Bigr\| \ =\
\sum\nolimits_{i=1}^m 1\ =\ m.
 \end{eqnarray*}
On the other hand, $\|d\|_2\leq \sum_{i=1}^m\|f\otimes g_i\otimes
g_i\|_2=m$, so we have
 \begin{equation}\label{=}
\|d\|_2=m.
 \end{equation}

Compairing (\ref{<m}) and (\ref{=}), we conclude that these two
norms are different.

 \end{proof}

\end{document}